\newtheorem{theorem}{Theorem}
\newtheorem{proposition}[theorem]{Proposition}
\newtheorem{lemma}[theorem]{Lemma}
\newtheorem{corollary}[theorem]{Corollary}
\newtheorem{remark}[theorem]{Remark}
\newcommand{\sm}[4]{\left(\begin{smallmatrix}#1&#2\\ #3&#4 \end{smallmatrix}
\right)}
\newcommand{\C}{\mathbb{C}}
\newcommand{\Q}{\mathbb{Q}}
\newcommand{\Z}{\mathbb{Z}}
\newcommand{\SL}{{\text {\rm SL}}}
\def\H{\mathbb{H}}
\def\f{f_{N,k}}
\def\D{\Delta_{N,k}(\tau)}
\title{Modular parametrizations of certain elliptic curves}
\author{Matija Kazalicki}
\email{mkazal@math.hr}
\address{Department of Mathematics, University of Zagreb, Bijenicka cesta 30, Zagreb, Croatia}
\author{Yuichi Sakai}
\email{ dynamixaxs@gmail.com}
\address{}
\author{Koji Tasaka}
\email{ k-tasaka@math.kyushu-u.ac.jp}
\address{ Graduate school of mathematics, Kyushu University, 744 Motooka Nishiku, Fukuoka-city, Fukuoka, 819-0395 Japan}
\keywords{modular parametrization, modular forms, Ramanujan-Serre differential operator, modular degree}
\date{}
\begin{document}


\begin{abstract}

Kaneko and Sakai \cite{KS} recently observed that certain elliptic curves whose associated newforms (by the modularity theorem) are given by the eta-quotients can be characterized by a particular differential equation involving modular forms and Ramanujan-Serre differential operator. 

In this paper, we study certain properties of modular parametrization associated to the elliptic curves over $\Q$, and as a consequence we generalize and explain some of their findings.

\end{abstract}
\maketitle
\section{Introduction}

By the modularity theorem \cite{BCDT,DS}, an elliptic curve $E$ over $\Q$ admits a modular parametrization $\Phi_E:X_0(N)\rightarrow E$ for some integer $N$. If $N$ is the smallest such integer, then it is equal to the conductor of $E$ and the pullback of the N\'eron differential of $E$ under $\Phi_E$ is a rational multiple of $2\pi i f_E(\tau)$, where $f_E(\tau)\in S_2(\Gamma_0(N))$ is a newform with rational Fourier coefficients. The fact that the $L$-function of $f_E(\tau)$ coincides with the Hasse-Weil zeta function of $E$ (which follows from Eichler-Shimura theory) is central to the proof of Fermat's last theorem, and is related to the Birch and Swinnerton-Dyer conjecture. In addition to this, modular parametrization is used for constructing rational points on elliptic curves, and appears in the Gross-Zagier formula \cite{GZ}.

In this paper, we study some general properties of $\Phi_E$, and as a consequences we explain and generalize the results of Kaneko and Sakai from \cite{KS}. 

Kaneko and Sakai (inspired by the paper of Guerzhoy \cite{G}) observed that certain elliptic curves whose associated newforms (by the modularity theorem) are given by the eta-quotients from the list of Martin and Ono \cite{MO} can be characterized by a particular differential equation involving holomorphic modular forms.

To give an example of this phenomena, let $f_{20}(\tau)=\eta(\tau)^4\eta(5\tau)^4$ be a unique newform of weight $2$ on $\Gamma_0(20)$, where $\eta(\tau)$ is the Dedekind eta function $\eta(\tau)=q^{1/24} \prod_{n>0} (1-q^n)$, $q=e^{2\pi i \tau}$, and put $\Delta_{5,4}(\tau)=f_{20}(\tau/2)^2$.  
Then an Eisenstein series $Q_5(\tau)$ on $M_4(\Gamma_0(5))$ associated either to cusp $i\infty$ or to cusp $0$ is a solution of the following differential equation 
\begin{align}\label{eq:ks}
\partial_{5,4}(Q_5)^2&=Q_5^3-\frac{89}{13}Q_5^2 \Delta_{5,4}-\frac{3500}{169} Q_5 \Delta_{5,4}^2-\frac{125000}{2197} \Delta_{5,4}^3,
\end{align}
 where  $\partial_{5,4}(Q_5(\tau))=\frac{1}{2\pi i}Q_5(\tau)'-\frac{1}{2\pi i}Q_5(\tau)\Delta_{5,4}(\tau)'/\Delta_{5,4}(\tau)$ is a Ramanujan-Serre differential operator. Throughout the paper, we use symbol $'$ to denote $\frac{d}{d\tau}$.
This differential equation defines a parametrization of an elliptic curve $E:y^2=x^3-\frac{89}{13}x^2-\frac{3500}{169}x-\frac{125000}{2197}$  by modular functions
\begin{equation*}
 x=\frac{Q_5(\tau)}{\Delta_{5,4}(\tau)},\quad y=\frac{\partial_{5,4}(Q_5)(\tau)}{\Delta_{5,4}(\tau)^{3/2}}, 
\end{equation*}
 and $f_{20}(\tau)$ is the newform associated to $E$.
One finds that $\Delta_{5,4}(\tau)\in S_4(\Gamma_0(5))$, so curiously the modular forms $\Delta_{5,4}, Q_5$ and $\partial(Q_5)$  appearing in this parametrization are modular for $\Gamma_0(5)$, although the conductor of $E$ is $20$.

Using the Eichler-Shimura theory, we generalize \eqref{eq:ks} to the arbitrary elliptic curve $E$ of conductor $4N$, $E:y^2=x^3+ax^2+bx+c$, where $a,b,c \in \Q$, which admits a modular parametrization $\Phi:X\rightarrow E$ satisfying 
\begin{equation*}
\Phi^*\left(\frac{dx}{2y}\right)=\pi i  f_{4N}(\tau/2)d\tau. 
\end{equation*}
Here $X$ is the modular curve $\H/ {\sm {\frac{1}{2}} 0 0 1}^{-1} \Gamma_0(4N) {\sm {\frac{1}{2}} 0 0 1}$, and $f_{4N}(\tau)\in S_2(\Gamma_0(4N))$ is a newform with rational Fourier coefficients associated to $E$. It follows from the modularity theorem that in any $\Q$-isomorphism class of elliptic curves there is an elliptic curve $E$ admitting such parametrization (note that for $u \in \Q^\times$ the change of variables $x=u^2X$ and $y=u^3Y$ implies $\frac{dX}{Y}=u \frac{dx}{y})$. 

To such $\Phi$ we associate a solution $Q(\tau)=x(\Phi(\tau)) f_{4N}(\tau/2)^2$  of a differential equation 
\begin{align}
\partial_{N,4}(Q)^2&=Q^3+a Q^2 \Delta_{N,4}+bQ \Delta_{N,4}^2+c\Delta_{N,4}^3,
\end{align}
where $\Delta_{N,4}(\tau)=f_{4N}(\tau/2)^2$, and $\partial_{N,4}(Q(\tau))=\frac{1}{2\pi i}Q(\tau)'-\frac{1}{2\pi i}Q(\tau)\Delta_{N,4}(\tau)'/\Delta_{N,4}(\tau)$.

We show in Corollary \ref{prop:delta} that $f_{4N}(\tau/2)^2$ is modular for $\Gamma_0(N)$. In general the solution $Q(\tau)$ will not be holomorphic and will be modular only for ${\sm {\frac{1}{2}} 0 0 1}^{-1} \Gamma_0(4N) {\sm {\frac{1}{2}} 0 0 1}$, but if the preimage of the point at infinity of $E$ under $\Phi$ is contained in cusps of $X$ and is invariant under the action of ${\sm 1 0 N 1}$ and ${\sm 1 1 0 1 }$ (acting on $X$ by M\"obius transformations), $Q(\tau)$ will be both holomorphic and modular for $\Gamma_0(N)$ (for more details see Proposition \ref{prop:main} and Theorem \ref{thm:mod}). Moreover, in Theorem \ref{thm:hol} we show that there are only finitely many (up to isomorphism) elliptic curves $E$ admitting $\Phi$ with these two properties.

We also obtain similar results generalizing the other examples from \cite{KS} that correspond to the elliptic curves over $\Q$ with $j$-invariant $0$ and $1728$ (see the next section).

\section{Main results}

Throughout the paper, let $N$ be a positive integer and $k\in \{4,6,8,12\}$. Let $E_k/\Q$ be an elliptic curve given by the short Weierstrass equation
$y^2=f_k(x)$, where
\begin{equation*}
\begin{array}{lcl}
f_4(x)&=&x^3 + a_2x^2+a_4x+a_6,\\
f_6(x)&=&x^3 + b_6,\\
f_8(x)&=&x^3 + c_4x,\\
f_{12}(x)&=&x^3 +d_6,\\
\end{array}
\end{equation*}
and $a_2, a_4, a_6, b_6, c_4, d_6 \in \Q$. Moreover, we assume $j(E_4)\ne 0, 1728.$

\noindent Let $$\displaystyle\f(\tau) \in S_2\left(\Gamma_0\left(\frac{k^2}{4}N\right)\right)$$ be a newform with rational Fourier coefficients, and let $\Gamma_k:={\sm {\frac{2}{k}} 0 0 1}^{-1} \Gamma_0(\frac{k^2}{4}N) {\sm {\frac{2}{k}} 0 0 1}$. Define $$\D:= \f(2\tau/k)^{k/2}\in S_k(\Gamma_k).$$ 



\noindent For $f(\tau)\in M^{\textrm{mer}}_4(\Gamma_k)$, we define the (Ramanujan-Serre) differential operator by
$$\partial_{N,k}(f(\tau))=\frac{k}{8\pi i} f'(\tau)-\frac{1}{2\pi i}f(\tau)\frac{\Delta_{N,k}'(\tau)}{\Delta_{N,k}(\tau)}\in M_6^{\textrm{mer}}(\Gamma_k).$$

Finally, assume that there is a meromorphic modular form $Q_k(\tau)\in M^{\textrm{mer}}_4(\Gamma_k)$, such that the corresponding differential equation holds

\begin{equation}\label{eq:1}
\begin{array}{rcl}
\partial_{N,4}(Q_4(\tau))^2 &=& Q_4(\tau)^3+a_2Q_4(\tau)^2\Delta_{N,4}(\tau)+a_4 Q_4(\tau)\Delta_{N,4}(\tau)^2+a_6 \Delta_{N,4}(\tau)^3\\
\partial_{N,6}(Q_6(\tau))^2&=& Q_6(\tau)^3+b_6 \Delta_{N,6}(\tau)^2\\
\partial_{N,8}(Q_8(\tau))^2&=& Q_8(\tau)^3+c_4 Q_8(\tau)\Delta_{N,8}(\tau)\\
\partial_{N,12}(Q_{12}(\tau))^2&=& Q_{12}(\tau)^3+d_6 \Delta_{N,12}(\tau).
\end{array}
\end{equation}

Each of these four identities defines a modular parametrization $\Psi_k:X_k \rightarrow E_k$
\begin{equation*}
\Psi_k(\tau)=\left( \frac{Q_k(\tau)}{\D^{4/k}},\frac{\partial_{N,k}(Q_k)(\tau)}{\D^{6/k}} \right),
\end{equation*}
where $X_k$ is the compactified modular curve $\H/\Gamma_k$.

\begin{proposition}\label{prop:1} Let $\frac{dx}{2y}$ be the N\'eron differential on $E_k$. Then
\begin{equation}\label{eq:par}
\Psi_k^*\left(\frac{dx}{2y}\right)=\frac{4\pi i}{k}  \f(2\tau/k)d\tau. 
\end{equation}
In particular, the conductor of $E_k$ is $\frac{k^2}{4}N$ and $\f(\tau)$ is the cusp form associated to $E_k$ by the modularity theorem.
\end{proposition}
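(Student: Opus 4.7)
The plan is a direct chain-rule computation followed by an Eichler--Shimura identification. Set $u(\tau):=f_{N,k}(2\tau/k)$, so that $\Delta_{N,k}=u^{k/2}$ with $k/2\in\{2,3,4,6\}$; the ``fractional powers'' $\Delta_{N,k}^{4/k}=u^2$ and $\Delta_{N,k}^{6/k}=u^3$ appearing in the definition of $\Psi_k$ are then honest holomorphic modular forms of weights $4$ and $6$ on $\Gamma_k$, and $\Psi_k(\tau)=(Q_k/u^2,\,\partial_{N,k}(Q_k)/u^3)$ is a well-defined meromorphic map $X_k\to E_k$.

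The first main step is to differentiate $x=Q_k/u^2$. Applying the quotient rule together with the logarithmic-derivative identity $u'/u=(2/k)\,\Delta_{N,k}'/\Delta_{N,k}$, one gets
\[
\frac{dx}{d\tau}=\frac{Q_k'-(4/k)\,Q_k\,\Delta_{N,k}'/\Delta_{N,k}}{u^2}=\frac{8\pi i}{k}\cdot\frac{\partial_{N,k}(Q_k)}{u^2},
\]
where the second equality is the definition of $\partial_{N,k}$ read backwards. Dividing by $2y=2\partial_{N,k}(Q_k)/u^3$ makes the $\partial_{N,k}(Q_k)$ factors cancel and yields $\Psi_k^*(dx/2y)=\frac{4\pi i}{k}u(\tau)\,d\tau=\frac{4\pi i}{k}f_{N,k}(2\tau/k)\,d\tau$, which is \eqref{eq:par}. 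For the second assertion, the substitution $\sigma=2\tau/k$ identifies $X_k$ with $X_0(k^2N/4)$ and carries this pullback to $2\pi i\,f_{N,k}(\sigma)\,d\sigma$, a nonzero holomorphic differential on $X_0(k^2N/4)$. Since $f_{N,k}$ is by hypothesis a newform of level $k^2N/4$ with rational Fourier coefficients, the Eichler--Shimura construction attaches to it an elliptic curve over $\Q$ of conductor $k^2N/4$ which, by the preceding pullback computation, is $\Q$-isogenous to $E_k$. Uniqueness of the newform with prescribed Fourier coefficients then forces $f_{N,k}$ to be the newform associated to $E_k$ by modularity, and the conductor of $E_k$ equals $k^2N/4$.

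The main obstacle is the bookkeeping in the first step: one must track carefully which expressions are genuine modular forms on $\Gamma_k$ versus purely symbolic fractional-weight symbols, and spot the telescoping that conjures $\partial_{N,k}(Q_k)$ out of $Q_k'$ and the logarithmic derivative of $\Delta_{N,k}$. Once the pullback formula is in hand, the identification of $f_{N,k}$ as the associated newform is a routine application of the Eichler--Shimura correspondence and the uniqueness of newforms with given $q$-expansion.
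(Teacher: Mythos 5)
Your computation is correct and is essentially the paper's own proof: the authors verify \eqref{eq:par} by exactly this quotient-rule calculation, with the factor $\partial_{N,k}(Q_k)$ cancelling just as in your telescoping step, so the two arguments coincide. Your additional justification of the second assertion (conductor $\frac{k^2}{4}N$ via the substitution $\sigma=2\tau/k$ and the Eichler--Shimura correspondence, using that conductor is an isogeny invariant) is sound and fills in a step the paper leaves implicit.
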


\begin{remark}
Note that when $k=6,8 \mbox{ or }12$, $\f(\tau)$ is a modular form with complex multiplication by the ring of integers of $\Q(\sqrt{-3})$, $\Q(\sqrt{-1})$ and $\Q(\sqrt{-3})$ respectively.
\end{remark}

Conversely, given a modular parametrization $\Phi_k:X_k\rightarrow E_k$ satisfying ($\ref{eq:par}$), we construct a differential equation (\ref{eq:1}) and its solution $Q_k(\tau)$ as follows.
 
Let $x$ and $y$ be functions on $E_k$ satisfying Weierstrass equation $y^2=f_k(x)$. Functions $x(\tau):=x\circ\Phi_k(\tau)$ and $y(\tau):=y\circ\Phi_k(\tau)$ satisfy  $y(\tau)^2=f_k(x(\tau))$. Moreover \eqref{eq:par} implies that
\begin{equation}\label{eq:5}
\left(\frac{k}{8\pi i}x'(\tau)\right)^2=\f(2\tau/k)^2 y(\tau)^2=\D^{4/k}f_k(x(\tau)).
\end{equation}
Define $Q_k(\tau) := x(\tau) \D^{4/k}$.

\begin{proposition}\label{lemma:j} The following formula holds
$$\partial_{N,k}(Q_k(\tau))^2=\D^{12/k}f_k(x(\tau)).$$
In particular, $Q_k(\tau)$ is a solution of $(\ref{eq:1})$.
\end{proposition}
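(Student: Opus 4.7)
The proof will be a direct calculation exploiting the precise form of the Ramanujan-Serre operator $\partial_{N,k}$, whose logarithmic correction term is engineered to cancel exactly the contribution coming from differentiating the factor $\Delta_{N,k}^{4/k}$ in $Q_k$.

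First I would compute $\partial_{N,k}(Q_k)$ directly. Writing $Q_k = x \cdot \Delta_{N,k}^{4/k}$ and applying the product rule,
\begin{equation*}
Q_k'(\tau) = x'(\tau)\, \Delta_{N,k}^{4/k} + \frac{4}{k}\, x(\tau)\, \Delta_{N,k}^{4/k}\, \frac{\Delta_{N,k}'}{\Delta_{N,k}}.
\end{equation*}
Substituting into the definition of $\partial_{N,k}$,
\begin{equation*}
\partial_{N,k}(Q_k) = \frac{k}{8\pi i}\, x'\, \Delta_{N,k}^{4/k} + \frac{1}{2\pi i}\, x\, \Delta_{N,k}^{4/k}\, \frac{\Delta_{N,k}'}{\Delta_{N,k}} - \frac{1}{2\pi i}\, x\, \Delta_{N,k}^{4/k}\, \frac{\Delta_{N,k}'}{\Delta_{N,k}},
\end{equation*}
and the last two terms cancel, leaving $\partial_{N,k}(Q_k) = \frac{k}{8\pi i}\, x' \, \Delta_{N,k}^{4/k}$.

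Squaring and invoking \eqref{eq:5} yields
\begin{equation*}
\partial_{N,k}(Q_k)^2 = \left(\frac{k}{8\pi i} x'\right)^{\!2} \Delta_{N,k}^{8/k} = \Delta_{N,k}^{4/k} f_k(x) \cdot \Delta_{N,k}^{8/k} = \Delta_{N,k}^{12/k}\, f_k(x(\tau)),
\end{equation*}
which proves the main identity.

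For the ``in particular'' clause I would just expand the right-hand side: since $f_k(x)$ is a cubic of the form $x^3+\alpha x^2+\beta x+\gamma$ (with some coefficients vanishing depending on $k$), multiplying by $\Delta_{N,k}^{12/k}$ and rewriting $x^j\,\Delta_{N,k}^{12/k}$ as $(x\,\Delta_{N,k}^{4/k})^j\,\Delta_{N,k}^{(12-4j)/k} = Q_k^j\,\Delta_{N,k}^{(12-4j)/k}$ for $j=0,1,2,3$ gives exactly the right-hand side of the appropriate case of \eqref{eq:1}. There is no real obstacle here: the Ramanujan–Serre operator and the ansatz $Q_k = x\,\Delta_{N,k}^{4/k}$ have been set up precisely to make this computation work, and the only subtlety is keeping track of the fractional exponent $4/k$ of $\Delta_{N,k}$, which is consistent because $\Delta_{N,k}$ is a square, cube or appropriate power in each of the relevant cases.
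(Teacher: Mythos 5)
Your computation is correct and is essentially the paper's own argument: both expand $\partial_{N,k}(Q_k)$ with the product rule, observe the cancellation of the logarithmic-derivative terms to get $\partial_{N,k}(Q_k)=\frac{k}{8\pi i}x'(\tau)\Delta_{N,k}^{4/k}$, and then square and apply \eqref{eq:5}. The paper states the cancellation more tersely and leaves the ``in particular'' expansion implicit, but there is no substantive difference.
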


Now we investigate conditions under which $Q_k(\tau)$ is holomorphic. The following lemma easily follows from the formula above.
\begin{lemma}
Assume that $\tau_0\in X_k$ is a pole of $x(\tau)$. Then 
\begin{equation*}
ord_{\tau_0}(Q_k(\tau)) = \begin{cases} 0, & \mbox{if } \tau_0\mbox{ is a cusp,} \\ -2, & \mbox{if } \tau_0\in \H. \end{cases}
\end{equation*}
\end{lemma}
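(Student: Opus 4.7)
The plan is to compare the order of vanishing at $\tau_0$ on the two sides of the identity
$$\partial_{N,k}(Q_k(\tau))^2 = \Delta_{N,k}(\tau)^{12/k} f_k(x(\tau))$$
supplied by Proposition \ref{lemma:j}. Set $m = -\ord_{\tau_0}(x(\tau)) > 0$ and $\mu = \ord_{\tau_0}(\Delta_{N,k})$. Since $f_k$ is a monic cubic in $x$, the right-hand side has order $\tfrac{12}{k}\mu - 3m$ at $\tau_0$.

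For the left-hand side I would first record the auxiliary simplification
$$\partial_{N,k}(Q_k(\tau)) = \frac{k}{8\pi i}\, x'(\tau)\, \Delta_{N,k}(\tau)^{4/k}.$$
Expanding $Q_k = x\, \Delta_{N,k}^{4/k}$ by the Leibniz rule produces a cross-term in $\Delta_{N,k}'$, which cancels exactly against the $\tfrac{1}{2\pi i} Q_k \Delta_{N,k}'/\Delta_{N,k}$ part of the definition of $\partial_{N,k}$ because $\tfrac{k}{8\pi i}\cdot \tfrac{4}{k} = \tfrac{1}{2\pi i}$. Taking orders, the left-hand side of the main identity becomes $2\,\ord_{\tau_0}(x') + \tfrac{8}{k}\mu$.

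The remaining ingredient, and the only place where the two cases of the lemma diverge, is how $\ord_{\tau_0}(x')$ relates to $\ord_{\tau_0}(x) = -m$. At $\tau_0 \in \H$ the local parameter is $\tau - \tau_0$ and differentiation lowers the order by one, so $\ord_{\tau_0}(x') = -m-1$; at a cusp of width $h$ the local parameter is $q_h = e^{2\pi i \tau/h}$ and $d/d\tau = \tfrac{2\pi i}{h}\, q_h\, d/dq_h$ preserves the order of any Laurent series of nonzero order, giving $\ord_{\tau_0}(x') = -m$. Equating the two sides of the identity and using $\ord_{\tau_0}(Q_k) = -m + \tfrac{4}{k}\mu$ forces $\tfrac{4}{k}\mu = m-2$ in the interior case and $\tfrac{4}{k}\mu = m$ in the cusp case, yielding $\ord_{\tau_0}(Q_k) = -2$ and $\ord_{\tau_0}(Q_k) = 0$ respectively. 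I do not expect any genuine obstacle; the only subtle point is tracking how $d/d\tau$ acts on the two types of local uniformizer.
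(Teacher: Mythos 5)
Your argument is correct and follows exactly the route the paper intends: the paper states the lemma "easily follows from the formula above," i.e.\ from $\partial_{N,k}(Q_k)^2=\Delta_{N,k}^{12/k}f_k(x(\tau))$ together with $\partial_{N,k}(Q_k)=\frac{k}{8\pi i}x'\Delta_{N,k}^{4/k}$, and your order comparison (using that $d/d\tau$ drops the order by one at an interior point but preserves it in the $q_h$-variable at a cusp) is precisely that computation carried out.
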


As a consequence, we have the following characterization of the holomorphicity of $Q_k(\tau)$ in terms of modular parametrization $\Phi_k$.
Denote by $\mathcal{C}$ the set of cusps of $X_k$, and by $\mathcal{O}$ the point at infinity of $E_k$.
\begin{proposition}\label{prop:main}
We have that
$Q_k(\tau)$ is holomorphic if and only if $\Phi_k^{-1}(\mathcal{O})\subset \mathcal{C}$.  
\end{proposition}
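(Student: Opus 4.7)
The plan is to analyze the factorization $Q_k(\tau) = x(\tau)\,\Delta_{N,k}(\tau)^{4/k}$ pointwise on $X_k$ and use the preceding lemma to identify all possible obstructions to holomorphicity. First I would unpack the second factor: since $\Delta_{N,k}(\tau) = f_{N,k}(2\tau/k)^{k/2}$, the symbol $\Delta_{N,k}^{4/k}$ is unambiguously $f_{N,k}(2\tau/k)^{2}$, which is a weight $4$ holomorphic form on $\Gamma_k$ vanishing at every cusp of $X_k$ (because $f_{N,k}$ is a cusp form on $\Gamma_0(k^2N/4)$ and $\tau \mapsto 2\tau/k$ identifies the cusps of $X_k$ with those of $X_0(k^2N/4)$). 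For the first factor, the coordinate function $x$ on $E_k$ is regular away from $\mathcal{O}$ and has a pole of order $2$ there, so $x(\tau) = x\circ\Phi_k(\tau)$ has poles exactly at the points of $\Phi_k^{-1}(\mathcal{O})$ and is holomorphic everywhere else.

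For the forward direction, assume $Q_k(\tau)$ is holomorphic. If some $\tau_0\in\H$ were a preimage of $\mathcal{O}$, then $\tau_0$ would be a pole of $x(\tau)$, and the preceding lemma would force $\ord_{\tau_0}(Q_k) = -2$, contradicting holomorphicity at $\tau_0$. Hence every preimage of $\mathcal{O}$ lies in $\mathcal{C}$.

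For the converse, assume $\Phi_k^{-1}(\mathcal{O}) \subset \mathcal{C}$. At any $\tau_0 \in \H$ we have $\Phi_k(\tau_0) \neq \mathcal{O}$, so $x(\tau)$ is holomorphic at $\tau_0$; combined with holomorphicity of $f_{N,k}(2\tau/k)^2$ on $\H$, this gives holomorphicity of $Q_k$ on $\H$. At a cusp $\tau_0 \in \mathcal{C}$, there are two cases: if $\tau_0 \notin \Phi_k^{-1}(\mathcal{O})$, then $x(\tau)$ is finite at $\tau_0$ while $f_{N,k}(2\tau/k)^2$ vanishes there, so $Q_k$ is holomorphic (and in fact vanishes); if $\tau_0 \in \Phi_k^{-1}(\mathcal{O})$, then $\tau_0$ is a pole of $x(\tau)$ and the preceding lemma yields $\ord_{\tau_0}(Q_k) = 0$, so $Q_k$ is holomorphic and nonvanishing there. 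In either case $Q_k$ is holomorphic at $\tau_0$.

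There is no serious obstacle: the only potentially delicate point is making sense of the fractional power $\Delta_{N,k}^{4/k}$ when $k \in \{6, 8, 12\}$, which is resolved by interpreting it as the explicit single-valued function $f_{N,k}(2\tau/k)^2$. After that, the proposition reduces to a direct case analysis applying the preceding lemma.
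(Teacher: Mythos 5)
Your proposal is correct and follows the same route the paper intends: the paper states Proposition \ref{prop:main} as an immediate consequence of the preceding lemma, exactly as you do, by writing $Q_k=x(\tau)\,f_{N,k}(2\tau/k)^2$, observing that the poles of $x(\tau)$ are precisely $\Phi_k^{-1}(\mathcal{O})$, and applying the lemma's order computation at interior points and at cusps. Your explicit case analysis (including the remark that $\Delta_{N,k}^{4/k}$ means the single-valued form $f_{N,k}(2\tau/k)^2$, which vanishes at all cusps) just spells out the details the paper leaves implicit.
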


In Section \ref{sec:hol} we show that the degree of $\Phi_k$ (as a function of the conductor) grows faster than the total ramification index at cusps hence the following theorem holds. 

\begin{theorem}\label{thm:hol}
There are finitely many elliptic curves $E/\Q$ (up to a $\Q$-isomorphism) that admit a modular parametrization $\Phi:X_k\rightarrow E$ with the property that $\Phi^{-1}(\mathcal{O}) \subset \mathcal{C}$. 

In particular, there are finitely many elliptic curves $E_k$ (up to a $\Q$-isomorphism) for which $Q_k(\tau)$ (which satisfy equation (\ref{eq:1})) is holomorphic.
\end{theorem}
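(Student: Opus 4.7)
The plan is to reformulate the statement using Proposition \ref{prop:main}: $Q_k$ is holomorphic if and only if $\Phi_k^{-1}(\mathcal{O}) \subset \mathcal{C}$, so it suffices to show that only finitely many elliptic curves $E/\Q$ admit a parametrization satisfying \eqref{eq:par} with this property. Conjugation by $\sm{2/k}{0}{0}{1}$ identifies $\Gamma_k$ with $\Gamma_0(k^2N/4)$, so $X_k \cong X_0(k^2N/4)$, and the standard formulas for the genus and the cusp count are available as $N\to\infty$.

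First I would compute the ramification index $e_c$ of $\Phi_k$ at a cusp $c \in \Phi_k^{-1}(\mathcal{O})$. Using \eqref{eq:par}, the pullback of the N\'eron differential is $(4\pi i/k)\f(2\tau/k)d\tau$; expressed in the local uniformizer $q_c = e^{2\pi i\tau/w_c}$ at a cusp of width $w_c$, this pullback vanishes to order $m_c - 1$, where $m_c$ is the order of vanishing of $\f(2\tau/k)$ at $c$. Since the N\'eron differential is nowhere vanishing on $E_k$, comparing local models of a ramified map at a point over the origin yields $e_c = m_c$.

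Next I would invoke Riemann–Hurwitz for $\Phi_k: X_k \to E_k$. Since $E_k$ has genus one,
\begin{equation*}
2g(X_k) - 2 \;=\; \sum_{P \in X_k}(e_P - 1) \;\geq\; \sum_{c \in \Phi_k^{-1}(\mathcal{O})}(e_c - 1) \;=\; \deg \Phi_k - |\Phi_k^{-1}(\mathcal{O})|,
\end{equation*}
which gives the a priori upper bound $\deg \Phi_k \leq 2g(X_k) - 2 + |\mathcal{C}(X_k)|$. Using the classical formulas for $X_0(k^2N/4)$, the right-hand side grows at most linearly in $N$ (the genus is of order $N$ and the number of cusps is of smaller order).

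The crucial and most delicate step is to complement this upper bound with a lower bound for the modular degree $\deg \Phi_k$ in the conductor $k^2N/4$ that eventually exceeds $2g(X_k) - 2 + |\mathcal{C}(X_k)|$. Once such a bound is available, the inequality is violated for all sufficiently large $N$, so the conductor of any $E$ admitting a parametrization with $\Phi^{-1}(\mathcal{O}) \subset \mathcal{C}$ must lie in a finite range; finiteness of $\Q$-isomorphism classes of elliptic curves with bounded conductor (Shafarevich, or direct tabulation) then completes the proof. The principal obstacle is precisely producing this lower bound uniformly over the rational newforms $\f$ of varying level, for which one would typically appeal to the Eichler–Shimura expression of $\deg \Phi_k$ in terms of the Petersson norm of $\f$ together with bounds on the periods of $E_k$.
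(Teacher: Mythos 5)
Your reduction is exactly the paper's: by Proposition \ref{prop:main} it suffices to bound the conductor of curves admitting $\Phi_k$ with $\Phi_k^{-1}(\mathcal{O})\subset\mathcal{C}$, and Riemann--Hurwitz applied to $\Phi_k:X_k\to E_k$ gives $\deg\Phi_k\le 2g-2+\#\mathcal{C}$, to be contradicted for large level by a lower bound on the degree. However, your proposal stops precisely at the step that carries all of the content: you yourself flag the superlinear lower bound on $\deg\Phi_k$ as \emph{the principal obstacle} and only gesture at the Eichler--Shimura/Petersson-norm expression for the degree, without proving or even citing a bound that is uniform and explicit in the level. That is a genuine gap. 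The paper closes it by invoking Watkins' unconditional estimate $\deg(\Phi)\ge \frac{M^{7/6}}{\log M}\cdot\frac{1/10300}{\sqrt{0.02+\log\log M}}$ for conductor $M$, together with the Csirik--Wetherell--Zieve genus bound $g<M\frac{e^\gamma}{2\pi^2}(\log\log M+2/\log\log M)$ and the trivial bound $\#\mathcal{C}\le M$, which makes \eqref{eq:6} impossible for $M>10^{50}$; finiteness of $\Q$-isomorphism classes with bounded conductor then finishes. The Petersson-norm route is indeed how such degree bounds are obtained, but turning it into an effective bound over all rational newforms of varying level is exactly the hard analytic input (Watkins), so it cannot be left as a remark in the proof.

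Two smaller points. First, your claim that $2g(X_k)-2+\#\mathcal{C}$ grows \emph{at most linearly} in the level is not quite right: the index of $\Gamma_0(M)$ is $M\prod_{p\mid M}(1+1/p)$, so the genus can be of size a constant times $M\log\log M$. Consequently a merely linear lower bound on the degree (e.g.\ Abramovich's $7M/1600$) does not by itself close the argument; one needs either a superlinear bound such as Watkins', or, as in the paper's remark, an a priori bound on the ramification indices at cusps (Brunault's suggested bound of $24$) so that the right-hand side of \eqref{eq:6} can be replaced by $24\,\#\mathcal{C}$, which is of much smaller order. Second, your observation that the ramification index at a cusp $c\in\Phi_k^{-1}(\mathcal{O})$ equals the order of vanishing of $\f(2\tau/k)$ at $c$ is correct and is in the spirit of that remark, but it yields nothing unless one also bounds that vanishing order, which your argument does not do.
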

Define  $A={\sm 1 0 N 1}$ and $T={\sm 1 1 0 1 }$. It is easy to see that $\Gamma_k$ is generated by $\Gamma_0(N)$ and $A$ and $T$ (Lemma \ref{lemma:easy}), hence $Q_k(\tau)$ is modular for $\Gamma_0(N)$ if and only if it is invariant under the action of slash operators $|A$ and $|T$.
The following theorem describes the modularity in terms of parametrization $\Phi_k$.

\begin{theorem}\label{thm:mod}
If $\Phi_k^{-1}(\mathcal{O})$ is invariant under $A$ and $T$, then $Q_k(\tau)$ is modular for $\Gamma_0(N)$. 
\end{theorem}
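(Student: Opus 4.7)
The plan is to verify the slash-operator identities $Q_k|_4 A = Q_k$ and $Q_k|_4 T = Q_k$, which combined with the modularity of $Q_k$ for $\Gamma_k$ and the observation preceding the theorem yield modularity for $\Gamma_0(N)$. I focus on $A$, the argument for $T$ being analogous. A direct check shows that $A$ normalizes $\Gamma_k$, so the map $\tau\mapsto A\tau$ descends to an involution $\iota_A$ of $X_k$. Writing $g(\tau):=f_{N,k}(2\tau/k)$ so that $\Delta_{N,k}=g^{k/2}$, Corollary~\ref{prop:delta} (or its analogue for $k>4$) gives $\Delta_{N,k}|_k A = \Delta_{N,k}$, from which connectedness of $\mathbb{H}$ lets one extract a single root of unity $\zeta$ with $\zeta^{k/2}=1$ satisfying
\begin{equation*}
g(A\tau) = \zeta\,(N\tau+1)^2\,g(\tau).
\end{equation*}

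The key step is to compare $\Phi_k$ with the composite $\tilde\Phi_k:=\Phi_k\circ\iota_A\colon X_k\to E_k$. By Proposition~\ref{prop:1} and the chain rule,
\begin{equation*}
\tilde\Phi_k^*\!\left(\frac{dx}{2y}\right) = \iota_A^*\Phi_k^*\!\left(\frac{dx}{2y}\right) = \tfrac{4\pi i}{k}(g|_2 A)(\tau)\,d\tau = \zeta\cdot\Phi_k^*\!\left(\frac{dx}{2y}\right).
\end{equation*}
An examination of $\mathrm{Aut}(E_k)$ --- which equals $\{\pm 1\}$ for $k=4$ and is enlarged in the CM cases $k\in\{6,8,12\}$, where $j(E_k)\in\{0,1728\}$ --- produces an automorphism $\sigma\colon(x,y)\mapsto(\alpha x,\beta y)$ of $E_k$ with $\sigma^*(dx/2y)=\zeta\,dx/2y$, equivalently $\alpha/\beta=\zeta$. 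The morphism $\tilde\Phi_k-\sigma\circ\Phi_k\colon X_k\to E_k$ then pulls back the translation-invariant differential $dx/2y$ to zero and is therefore constant. The hypothesis $\iota_A(\Phi_k^{-1}(\mathcal{O}))=\Phi_k^{-1}(\mathcal{O})$, combined with $\sigma(\mathcal{O})=\mathcal{O}$, forces this constant to be $\mathcal{O}$, so $\tilde\Phi_k=\sigma\circ\Phi_k$.

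Setting $\phi:=x\circ\Phi_k$, this yields $\phi(A\tau)=\alpha\,\phi(\tau)$. A direct computation then gives
\begin{equation*}
Q_k|_4 A(\tau) = (N\tau+1)^{-4}\phi(A\tau)g(A\tau)^2 = \alpha\zeta^2\,Q_k(\tau),
\end{equation*}
and the coefficient-matching identity $\beta^2=\alpha^3$ (forced by comparing $x^3$-terms in $\beta^2 f_k(x)=f_k(\alpha x)$) yields $\alpha\zeta^2=\alpha^3/\beta^2=1$, so $Q_k|_4 A = Q_k$. The main obstacle is the case analysis producing the matching automorphism $\sigma$ for each $k$ and each root of unity $\zeta$ that can occur; the richer automorphism groups of the CM curves for $k\in\{6,8,12\}$ make this uniformly possible.
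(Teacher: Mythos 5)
Your proposal is correct, and it reaches the paper's key identity by a genuinely different route. The paper factors $\Phi_k$ through $Jac(X_k)$ and the strong Weil curve (Eichler--Shimura), uses the exact multipliers of Lemma \ref{lemma:main} ($\f(2\tau/k)|_2S=e^{\mp 4\pi i/k}\f(2\tau/k)$) to see that $S\in\{A,T\}$ descends to an automorphism of $E_k$ of order $k/2$, and then, in Proposition \ref{prop:xy}, does a case analysis on $\mathrm{Aut}(E_k)$ to read off how $x(\tau)$ (for $k=4,8$) or $y(\tau)$ (for $k=6,12$) transforms, needing the extra Lemma \ref{lemma:ramanujan} to transfer invariance from $\partial_{N,k}(Q_k)=y(\tau)\D^{6/k}$ back to $Q_k$ when $k=6,12$. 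You instead derive the same relation $\Phi_k\circ\iota_S=\sigma\circ\Phi_k$ by a rigidity argument: both maps pull back the invariant differential to $\zeta\cdot\frac{4\pi i}{k}\f(2\tau/k)d\tau$, so their pointwise difference is a constant, which the hypothesis on $\Phi_k^{-1}(\mathcal{O})$ pins down to $\mathcal{O}$; then the uniform identity $\alpha\zeta^2=\alpha^3/\beta^2=1$ gives $Q_k|_4S=Q_k$ for all four values of $k$ at once. This buys two things: you never need the precise value of the multiplier (only $\zeta^{k/2}=1$, which you extract from Corollary \ref{prop:delta}(a)), and you avoid both the $x$-versus-$y$ case split and Lemma \ref{lemma:ramanujan}. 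The cost is that your argument still leans on Corollary \ref{prop:delta}(a), which in the paper is itself proved via Lemma \ref{lemma:main}, so the hard computational input (support of the Fourier coefficients modulo $k/2$ plus the Fricke conjugation) is shared rather than eliminated; and you still need the automorphism-group inspection to produce $\sigma$ with multiplier $\zeta$, which is where the CM cases $j\in\{0,1728\}$ enter, exactly as in the paper. Two harmless slips: $\iota_A$ is an involution only for $k=4$ (in general it has order $k/2$ on $X_k$), and the pullback formula you attribute to Proposition \ref{prop:1} is, in the setting of Theorem \ref{thm:mod}, the standing hypothesis \eqref{eq:par} rather than that proposition; neither affects the argument.
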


\section{Proofs}

\subsection{Proof of Proposition \ref{prop:1} and Proposition \ref{lemma:j}}

\begin{proof}[Proof of Proposition \ref{prop:1}]
\begin{equation*}
\begin{array}{lcl}
\Psi_k^*\left(\frac{dx}{2y}\right)&=& \frac{1}{2}\frac{d}{d\tau}\left( \frac{Q_k(\tau)}{\D^{4/k}}\right) \frac{\D^{6/k}}{\partial_{N,k}(Q_k)(\tau)}d\tau\\&=&\frac{1}{2}\frac{\frac{d}{d\tau}Q_k(\tau)\f(2\tau/k)^2-\frac{d}{d\tau}\f(2\tau/k)^2Q_k(\tau)}{\f(2\tau/k)^4}\frac{\f(2\tau/k)^3}{\frac{k}{8\pi i}\frac{d}{d\tau}Q_k(\tau)-Q_ks(\tau)\frac{\frac{d}{d\tau}\f(2\tau/k)^{k/2}}{2\pi i \f(2\tau/k)^{k/2}}}d\tau\\
&=& \frac{4\pi i}{k}  \f(2\tau/k)d\tau.  

\end{array}
\end{equation*}
\end{proof}
\begin{proof}[Proof of Proposition \ref{lemma:j}]
By definition, 
\begin{equation*}
\begin{array}{lcl}
\partial_{N,k}(Q_k(\tau)) &=&\frac{k}{8\pi i}(x(\tau) \D^{4/k})'-\frac{1}{2\pi i}x(\tau) \D^{4/k}\frac{\Delta_{N,k}'(\tau)}{\Delta_{N,k}(\tau)}\\ &=&\frac{k}{8\pi i} x'(\tau)\D^{4/k}.
\end{array}
\end{equation*}
Hence the claim follows from (\ref{eq:5}).
\end{proof}

\subsection{Proof of Theorem \ref{thm:hol}}\label{sec:hol}

Let $e_x\in \Z$ be the ramification index of $\Phi_k$ at $x \in X_k$, and let $\deg(\Phi_k)$ be the degree of $\Phi_k$. It follows from the Hurwitz formula that $\sum_{x\in X_k}(e_x-1)=2g-2$, where $g$ is the genus of $X_k$ (note that the genus of $X_k$ is equal to the genus of $\Gamma_0(\frac{k^2}{4}N)$). Therefore $\Phi_k^{-1}(\mathcal{O})\subset \mathcal{C}$ implies
\begin{equation}\label{eq:6}
\deg(\Phi_k) \le \sum_{x\in \mathcal{C}}e_{x} \le 2g-2+\#\mathcal{C}.
\end{equation}
In \cite{Wa}, Watkins proved a lower bound for the degree of modular  parametrization $\Phi$ of an elliptic curve over $\Q$ of conductor $M$ 
$$\deg(\Phi)\ge \frac{M^{7/6}}{\log{M}}\cdot \frac{1/10300}{\sqrt{0.02+\log\log{M}}}.$$
On the other hand, an upper bound (see \cite{CWZ}) for the genus $g$ of $X_0(M)$ is
$$g<M\frac{e^\gamma}{2\pi^2}(\log\log M+2/\log\log M)\mbox{ for } M>2,$$
where $\gamma=0.5772\ldots$ is Euler's constant.

If we use a trivial bound $\#\mathcal{C}\le M$, an easy calculation shows that (\ref{eq:6}) can not hold for curves $E_k$ of conductor greater than $10^{50}$. Therefore, we have proved the Theorem \ref{thm:hol}.

\begin{remark}
If we assume that ramification index at cusps is bounded by 24 (as suggested in the paper of Brunault \cite{B}), and if we use Abramovich \cite{A} lower bound for modular degree $\deg(\Phi) \ge 7M/1600$, we obtain that (\ref{eq:6}) can not hold for elliptic curves of conductor greater than $2^{19}$.
\end{remark}

\subsection{Proof of Theorem \ref{thm:mod}}

In this section we investigate conditions on modular parametrization $\Phi_k$ under which $\D$ and $Q_k(\tau)$, initially modular for $\Gamma_k$, are modular for $\Gamma_0(N)$.

For $S={\sm a b c d} \in \SL_2(\Z)$, and a (meromorphic) modular form $f(\tau)$ of weight $l$, we define the usual slash operator as $f(\tau)|_lS:=f(S\tau)(c\tau+d)^{-l}$, where $S\tau=\frac{a \tau + b}{c\tau + d}$. Define $T={\sm 1 1 0 1 }$ and $A={\sm 1 0 N 1}$. 

\begin{lemma}\label{lemma:easy}
Group $\Gamma_0(\frac{k}{2}N)$ is generated by $\Gamma_k$ and $T$, while $\Gamma_0(N)$  is generated by $\Gamma_0(\frac{k}{2}N)$ and $A$.
\end{lemma}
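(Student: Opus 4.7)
The plan is first to obtain an explicit description of $\Gamma_k$ and then use a two-step reduction algorithm to express an arbitrary element of $\Gamma_0(\tfrac{k}{2}N)$ (resp.\ $\Gamma_0(N)$) as a product of generators.

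First I would compute $\Gamma_k$ directly. Conjugating $\sm a b c d \in \Gamma_0(\tfrac{k^2}{4}N)$ by $\sm{2/k}0 0 1$ yields $\sm a {kb/2} {2c/k} d$, and using $k \in \{4,6,8,12\}$ together with $c \equiv 0 \pmod{\tfrac{k^2}{4}N}$ one finds that $\Gamma_k$ consists precisely of the integer matrices $\sm a b c d$ with $b \equiv 0 \pmod{k/2}$ and $c \equiv 0 \pmod{\tfrac{k}{2}N}$. In particular $\Gamma_k \subset \Gamma_0(\tfrac{k}{2}N) \subset \Gamma_0(N)$ and $T \in \Gamma_0(\tfrac{k}{2}N)$, $A \in \Gamma_0(N)$, which gives the trivial inclusions $\langle \Gamma_k, T\rangle \subset \Gamma_0(\tfrac{k}{2}N)$ and $\langle \Gamma_0(\tfrac{k}{2}N), A\rangle \subset \Gamma_0(N)$.

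For the first assertion, let $g = \sm a b c d \in \Gamma_0(\tfrac{k}{2}N)$. Since $ad-bc=1$ gives $\gcd(a,c)=1$ and $\tfrac{k}{2}\mid c$, we have $\gcd(a,k/2)=1$, so $a$ is invertible mod $k/2$ and there exists $n \in \Z$ with $an+b \equiv 0 \pmod{k/2}$. Then $gT^n = \sm a {an+b} c {cn+d}$ belongs to $\Gamma_k$, so $g = (gT^n)T^{-n} \in \langle \Gamma_k, T \rangle$.

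For the second assertion, given $g=\sm a b c d \in \Gamma_0(N)$ with $c = Nm$, I would look for $s, n \in \Z$ such that $A^n T^s g \in \Gamma_0(\tfrac{k}{2}N)$. Left multiplication by $T^s$ replaces $a$ by $a' = a+sNm$, and then left multiplication by $A^n$ sends the lower-left entry to $N(m+na')$, so the goal becomes $m + na' \equiv 0 \pmod{k/2}$, solvable in $n$ provided $\gcd(a',k/2) \mid m$. The main obstacle is therefore to choose $s$ so that this divisibility holds. I would argue prime by prime: for each prime $p \mid k/2$, the relation $\gcd(a,Nm)=1$ rules out the simultaneous possibility $p \mid a$ and $p \mid m$; a short case analysis shows that the condition ``$v_p(a+sNm) \le v_p(m)$'' forbids at most one residue class of $s$ modulo $p$ (for instance, if $p \nmid a$ and $p \nmid m$ and $p \nmid N$, one avoids $s \equiv -a(Nm)^{-1} \pmod p$; the other cases are handled similarly or are vacuous). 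By CRT these finitely many one-residue obstructions can be avoided simultaneously, giving the desired $s$, and then $g \in T^{-s}A^{-n}\,\Gamma_0(\tfrac{k}{2}N) \subset \langle \Gamma_0(\tfrac{k}{2}N), A \rangle$.
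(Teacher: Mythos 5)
Your proof is correct, and for the first statement it is essentially the paper's argument: you identify $\Gamma_k=\Gamma_0(\frac{k}{2}N)\cap\Gamma^0(\frac{k}{2})$ and then clear the upper-right entry modulo $\frac{k}{2}$ by one right multiplication by a power of $T$, using $\gcd(a,\frac{k}{2})=1$ (which holds because $\frac{k}{2}\mid c$). The interesting difference is in the second statement, which the paper dismisses with ``proved analogously.'' The literal analogue --- a single left or right multiplication by a power of $A$ --- needs $\gcd(a,\frac{k}{2})=1$ or $\gcd(d,\frac{k}{2})=1$, and this can fail: for instance $\sm 2 1 3 2\in\Gamma_0(3)$ with $N=3$, $\frac{k}{2}=2$ has both $a$ and $d$ even, and $A^{n}$ alone (on either side) never lands it in $\Gamma_0(6)$. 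Your two-step reduction repairs exactly this: first multiply by $T^{s}$ (legitimate, since $T\in\Gamma_0(\frac{k}{2}N)$) to make the relevant entry coprime to $\frac{k}{2}$ where needed, then by $A^{n}$; the prime-by-prime choice of $s$ with CRT (only $p=2,3$ occur, since $\frac{k}{2}\in\{2,3,4,6\}$) shows such an $s$ exists, and your condition $v_p(a+sNm)\le v_p(m)$ is a sufficient strengthening of the needed $\gcd(a+sNm,\frac{k}{2})\mid m$ that excludes at most one residue class of $s$ modulo each $p$. So your write-up not only matches the paper on the first half but supplies a complete argument for the half the paper leaves implicit; the only cost is the slightly heavier bookkeeping of the valuation case analysis.
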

\begin{proof}
To prove the first statement, let ${\sm a b c d}\in \Gamma_0(\frac{k}{2}N)$. Then $\gcd(a,\frac{k}{2})=1$, and there is $r\in \Z$ such that $ar\equiv -b \bmod{\frac{k}{2}}$. Then ${\sm a b c d}T^r\in \Gamma_k=\Gamma_0(\frac{k}{2}N)\cap \Gamma^0(\frac{k}{2})$, and the claim follows.

Second statement is proved analogously. 

\end{proof}

Therefore, to prove that $\D$ and $Q_k(\tau)$ are modular for $\Gamma_0(N)$ it suffices to show their invariance under the slash operators $|T$ and $|A$.

\begin{lemma}\label{lemma:normal}
Matrices $A$ and $T$ normalize $\Gamma_k$.
\end{lemma}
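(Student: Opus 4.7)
The strategy is to use the concrete description $\Gamma_k = \Gamma_0(\tfrac{k}{2}N)\cap \Gamma^0(\tfrac{k}{2})$ that underlies the proof of Lemma~\ref{lemma:easy}, rather than the conjugate form involving $\sm{2/k}{0}{0}{1}$. Under this description, a matrix $\gamma = \sm{a}{b}{c}{d} \in \SL_2(\Z)$ lies in $\Gamma_k$ if and only if $b \equiv 0 \pmod{k/2}$ and $c \equiv 0 \pmod{\tfrac{k}{2}N}$. I will verify that conjugation by $A$ and by $T$ preserves these two divisibility conditions.

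A direct matrix multiplication gives
\begin{equation*}
A \gamma A^{-1} = \begin{pmatrix} a - Nb & b \\ N(a-d) + c - N^2 b & Nb + d \end{pmatrix}, \qquad T\gamma T^{-1} = \begin{pmatrix} a+c & b+d-a-c \\ c & d-c \end{pmatrix}.
\end{equation*}
For $A \gamma A^{-1}$, the top-right entry equals $b$ and is unchanged, while $c$ and $N^2 b$ are already divisible by $\tfrac{k}{2}N$ under our hypotheses; thus the bottom-left entry is divisible by $\tfrac{k}{2}N$ precisely when $N(a-d)$ is, i.e., when $a \equiv d \pmod{k/2}$. For $T\gamma T^{-1}$, the bottom-left entry is again $c$, and the top-right entry is congruent to $d-a$ modulo $k/2$ (using that $b$ and $c$ are both divisible by $k/2$). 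So in both cases the problem collapses to the single congruence $a \equiv d \pmod{k/2}$.

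The key step, and the only real content, is verifying this congruence. From $ad - bc = 1$ together with $b, c \equiv 0 \pmod{k/2}$, one obtains $ad \equiv 1 \pmod{k/2}$. For each of the allowed values $k \in \{4,6,8,12\}$, the modulus $k/2 \in \{2,3,4,6\}$ has the property that $(\Z/(k/2)\Z)^\times$ has exponent dividing $2$, i.e., every unit is its own inverse. Hence $ad \equiv 1$ forces $a \equiv d \pmod{k/2}$, as required. This arithmetic restriction is precisely where the hypothesis $k\in\{4,6,8,12\}$ is used. The reverse inclusions follow from the same calculations with $N$ and $1$ replaced by $-N$ and $-1$, yielding $A\Gamma_k A^{-1}=\Gamma_k$ and $T\Gamma_k T^{-1}=\Gamma_k$, so $A$ and $T$ normalize $\Gamma_k$.
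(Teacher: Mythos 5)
Your proof is correct and follows essentially the same route as the paper: both use the identification $\Gamma_k=\Gamma_0(\tfrac{k}{2}N)\cap\Gamma^0(\tfrac{k}{2})$, deduce $a\equiv d\pmod{k/2}$ from $ad\equiv 1\pmod{k/2}$ and $k/2\in\{2,3,4,6\}$, and check the conjugated matrices entrywise. The only cosmetic differences are that the paper conjugates by $A^{-1}$ and $T^{-1}$ instead of $A$ and $T$, and leaves the reverse inclusion implicit where you spell it out.
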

\begin{proof}
Let ${\sm a b c d}\in \Gamma_k=\Gamma_0(\frac{k}{2}N)\cap \Gamma^0(\frac{k}{2})$. Then $\frac{k}{2}N | c$ and $\frac{k}{2}|c$, and $ad\equiv 1 \pmod{\frac{k}{2}}$. In particular, since $\frac{k}{2} \in \{2,3,4,6\}$, it follows that $a\equiv d \pmod{\frac{k}{2}}$.

Since
\begin{equation*}
\begin{array}{rcl}
A^{-1}{\sm a b c d} A&=&{\sm {a+bN} b {-aN-bN^2+c+dN} {-bN+d}},\\
T^{-1}{\sm a b c d}T&=&\displaystyle{\sm {a-c} {a+b-c-d} c {c+d}},
\end{array}
\end{equation*}
the claim follows.
\end{proof}

For a prime $p$, define the Hecke operator $T_p$ as a double coset operator $\Gamma_k {\sm 1 0 0 p} \Gamma_k$ acting on the space of cusp forms on $\Gamma_k$. Slash operators $|A$ and $|T$ correspond to $\Gamma_k A \Gamma_k$ and $\Gamma_k T \Gamma_k$ (see Chapter 5 of \cite{DS}).

Define the Fricke involution $|_2B$ on $S_2(\Gamma_k)$ by the matrix $B:={\sm 0 {-\frac{k}{2}} {\frac{k}{2}N} 0 }$. Note that $|_2B$ is the conjugate of the usual Fricke involution on $\Gamma_0(\frac{k^2}{4}N)$. In particular, $B$ normalizes $\Gamma_k$, and $|_2B$ commutes with all the Hecke operators $T_p$, $p \nmid \frac{k^2}{4}N$. Hence, $\f(2\tau/k)|_2B=\lambda_{k,N} \f(2\tau/k)$ for some $\lambda_{k,N} = \pm 1$.

\begin{lemma}\label{lemma:main}
The following are true.
\begin{enumerate}
\item[a)]
$$\f(2\tau/k)|_2T=e^{4\pi i/k}\f(2\tau/k),$$
\item[b)]
$$\f(2\tau/k)|_2A= e^{-4\pi i/k}\f(2\tau/k).$$
\end{enumerate}
In particular, $|_2A$ and $|_2B$ have order $\frac{k}{2}$ when acting on $\f(2\tau/k)$.
\end{lemma}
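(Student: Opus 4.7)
The plan is to reduce part (a) to an arithmetic vanishing statement for the Fourier coefficients of $f_{N,k}$, and then to derive part (b) from (a) by conjugating with the Fricke matrix $B$.

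I would begin by observing that the stabilizer of $i\infty$ in $\Gamma_k$ is generated by $T^{k/2}$ (since $T^j \in \Gamma_k$ iff $\tfrac{k}{2} \mid j$), so $i\infty$ has width $k/2$ in $X_k$ and the local uniformizer is $q_k := e^{4\pi i \tau/k}$. Writing $f_{N,k}(\tau) = \sum_{n\ge 1} a_n e^{2\pi i n \tau}$ gives $g(\tau) := f_{N,k}(2\tau/k) = \sum_{n\ge 1} a_n q_k^n$, and
\[
g|_2 T(\tau) \;=\; g(\tau+1) \;=\; \sum_{n\ge 1} a_n\, e^{4\pi i n/k}\, q_k^n,
\]
so part (a) is equivalent to the arithmetic claim that $a_n = 0$ unless $n \equiv 1 \pmod{k/2}$. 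For $k = 4$ this follows because $2^2 \mid 4N$ forces $a_2 = 0$, hence $a_{2^r} = a_2^r = 0$ by the Hecke recursion at the bad prime $2$, and multiplicativity then gives $a_n = 0$ for all even $n$. For $k \in \{6, 8, 12\}$, the newform $f_{N,k}$ has CM by $K = \Q(\sqrt{-3})$ (if $k = 6, 12$) or $K = \Q(\sqrt{-1})$ (if $k = 8$), which forces $a_p = 0$ at primes inert in $K$; together with $a_p = 0$ at the prime dividing $k/2$ (since the level is divisible by $(k/2)^2$), the Hecke recursions allow $a_n \neq 0$ only when $n$ is a product of primes $p \equiv 1 \pmod{k/2}$ together with even powers of primes $p \equiv -1 \pmod{k/2}$. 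Since $p^2 \equiv 1 \pmod{k/2}$ in the latter case, any such $n$ is indeed $\equiv 1 \pmod{k/2}$.

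For part (b), the key is the matrix identity $B T^{-1} B^{-1} = A$, which I would verify by direct computation from $B = \sm{0}{-k/2}{kN/2}{0}$. Since $f_{N,k}$ is a newform on $\Gamma_0(k^2N/4)$, it is an eigenform for the standard Fricke involution, and the conjugation $B = M^{-1} W_0 M$ with $M = \sm{2/k}{0}{0}{1}$ translates this to $g|_2 B = \lambda\, g$ with $\lambda = \pm 1$ as noted in the paper. Using that $f \mapsto f|_2 \gamma$ is a right action, part (a), and $g|_2 B^{-1} = \lambda^{-1} g$ (from $(g|_2 B)|_2 B^{-1} = g|_2 I = g$), one finds
\[
g|_2 A \;=\; \bigl((g|_2 B)|_2 T^{-1}\bigr)\bigl|_2 B^{-1} \;=\; \lambda \cdot e^{-4\pi i/k} \cdot \lambda^{-1}\, g \;=\; e^{-4\pi i/k}\, g.
\]
The in-particular assertion follows from the fact that $e^{\pm 4\pi i/k}$ is a primitive $(k/2)$-th root of unity for each $k \in \{4, 6, 8, 12\}$.

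The principal obstacle is the Fourier vanishing underlying part (a). Although elementary in each case, it requires separate treatment of $k = 4$ (pure bad-prime argument) versus $k \in \{6, 8, 12\}$ (combining CM with the bad-prime argument at $2$ or $3$), and one must carefully track residue classes modulo $k/2$ through the multiplicative structure of $a_n$. Once this vanishing is established, (a) is immediate and (b) follows from the conjugation calculation via $B T^{-1} B^{-1} = A$.
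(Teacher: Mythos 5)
Your proposal is correct and follows essentially the same route as the paper: part (a) is reduced to the observation that the Fourier coefficients of $\f$ are supported on $n\equiv 1\pmod{k/2}$ (via $a_2=0$ for level divisible by $4$ in the $k=4$ case and the CM vanishing at inert primes plus bad primes in the cases $k=6,8,12$), and part (b) follows from the identity $A=BT^{-1}B^{-1}$ together with the Fricke eigenvalue $\lambda_{k,N}=\pm1$ cancelling out. You simply spell out the multiplicativity and residue-class bookkeeping in more detail than the paper does.
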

\begin{proof}
A key observation is that the Fourier coefficients of $\f(\tau)$ are supported at integers that are $1 \bmod \frac{k}{2}$. This implies
$$\f(2\tau/k)|_2T=e^{4\pi i/k}\f(2\tau/k).$$
When $k=4$ (and $k=12$) this is a consequence of the general fact that $a_f(2)=0$ whenever $f(\tau)=\sum a_f(n)q^n$ is a newform of level divisible by $4$ (see \cite{O}, p.29). In the other three cases, $\f(\tau)$ is a modular form with complex multiplication by the ring of integers of $\Q(\sqrt{-3})$ or $\Q(\sqrt{-1})$, hence its Fourier coefficients $a_{\f}(p)$ are zero when $p$ is an inert prime (i.e. $p \equiv 2 \pmod{3}$ or $p \equiv 3\pmod{4}$ respectively). Multiplicativity of the Fourier coefficients then implies the observation.

On the other hand $A= BT^{-1}B^{-1}$, therefore
\begin{equation*}
\begin{array}{lcl}
\f(2\tau/k)|_2A=(\f(2\tau/k)|_2B)|_2T^{-1}|_2B^{-1}&=&(\lambda_{k,N}\f(2\tau/k)|_2T^{-1})|_2B^{-1}\\
&=&\lambda_{k,N}\lambda_{k,N}^{-1} e^{-4\pi i /k}\f(2\tau/k).
\end{array}
\end{equation*}
\end{proof}

\begin{corollary}\label{prop:delta}
We have that 
\begin{enumerate}
\item[a)] $\D\in S_k(\Gamma_0(N)),$
\item[b)] $\Delta_{N,8}(\tau)^{1/2}|_4A=-\Delta_{N,8}(\tau)^{1/2}$ and $\Delta_{N,8}(\tau)^{1/2}|_4T=-\Delta_{N,8}(\tau)^{1/2},$
\item[c)] $\Delta_{N,12}(\tau)^{1/2}|_6A=-\Delta_{N,12}(\tau)^{1/2}$ and $\Delta_{N,12}(\tau)^{1/2}|_6T=-\Delta_{N,12}(\tau)^{1/2}.$
\end{enumerate}
\end{corollary}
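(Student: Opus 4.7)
The plan is to derive this corollary as a direct consequence of Lemma \ref{lemma:main} together with Lemma \ref{lemma:easy}. The key observation is that the slash operator is multiplicative on products: for any (meromorphic) weight-$2$ modular form $f$, any $\gamma \in \SL_2(\Z)$, and any positive integer $m$,
\begin{equation*}
f^m\,|_{2m}\,\gamma = (f|_2 \gamma)^m.
\end{equation*}
Applied to $f = \f(2\tau/k)$, this converts Lemma \ref{lemma:main} into an explicit description of the action of $|T$ and $|A$ on any integer power of $\f(2\tau/k)$.

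For part (a), I would take $m = k/2$ and use Lemma \ref{lemma:main} to compute $\D|_k T = (e^{4\pi i/k})^{k/2}\D = e^{2\pi i}\D = \D$ and, analogously, $\D|_k A = (e^{-4\pi i/k})^{k/2}\D = \D$. Since $\D$ is already modular for $\Gamma_k$ by construction and Lemma \ref{lemma:easy} shows that $\Gamma_0(N)$ is generated by $\Gamma_k$, $A$, and $T$, this forces $\D$ to be modular for $\Gamma_0(N)$. The cusp-vanishing condition transfers automatically from $\Gamma_k$ to $\Gamma_0(N)$ because $\Gamma_k \subset \Gamma_0(N)$, hence $\D \in S_k(\Gamma_0(N))$.

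For parts (b) and (c), the exponent $k/2$ equals $4$ or $6$, both even, so $\D^{1/2} = \f(2\tau/k)^{k/4}$ is an honest holomorphic modular form of weight $k/2$. Repeating the argument above with $m = k/4$ and using Lemma \ref{lemma:main} yields $\D^{1/2}|_{k/2} T = (e^{4\pi i/k})^{k/4}\D^{1/2} = e^{\pi i}\D^{1/2} = -\D^{1/2}$, and similarly $|_{k/2}A$ contributes $(e^{-4\pi i/k})^{k/4} = e^{-\pi i} = -1$. Specializing to $k = 8$ and $k = 12$ recovers the identities in (b) and (c).

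There is no genuine obstacle here: once Lemma \ref{lemma:main} is in hand, the corollary reduces to bookkeeping of roots of unity combined with the multiplicative behavior of the slash operator on products. The only mildly subtle point worth highlighting is that $\D^{1/2}$ exists as a single-valued modular form of integer weight precisely when $k/4 \in \Z$, which is why parts (b) and (c) are stated for $k = 8$ and $k = 12$ rather than also for $k = 4, 6$.
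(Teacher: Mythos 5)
Your argument is correct and is essentially the paper's own (implicit) proof: the corollary is presented as an immediate consequence of Lemma \ref{lemma:main}, obtained by raising the eigenvalue relations for $|_2T$ and $|_2A$ to the powers $k/2$ (resp.\ $k/4$) via multiplicativity of the slash operator and then invoking Lemma \ref{lemma:easy} together with the fact that $\Gamma_k\subset\Gamma_0(N)$ for the cusp conditions. Only your closing aside is slightly off: for $k=4$ the half-power $\Delta_{N,4}(\tau)^{1/2}=f_{N,4}(\tau/2)$ does exist as an integer-weight (weight $2$) form, so the restriction of parts (b) and (c) to $k=8,12$ reflects which half-powers are actually needed later (for $Q_8=x(\tau)\Delta_{N,8}^{1/2}$ and $\partial_{N,12}(Q_{12})=y(\tau)\Delta_{N,12}^{1/2}$), not nonexistence; this does not affect the validity of your proof.
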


We now recall some basic facts about Jacobians of modular curves. For more details see Chapter 6 of \cite{DS}.  Denote by $Jac(X_k)$ the Jacobian of $X_k$. We will view it either as $S_2(\Gamma_k)^\wedge/H_1(X_k,\Z)$ (where $\gamma\in H_1(X_k,\Z)$ acts on $f(\tau) \in S_2(\Gamma_k)$ by $f(\tau)\mapsto \int_\gamma f(\tau)d\tau$), or as the Picard group $Pic^0(X_k)$ of $X_k$, which is the quotient $Div^0(X_k)/Div^l(X_k)$ of the degree zero divisors of $X_k$ modulo principal divisors. If $x_0$ is a base point in $X_k$ then $X_k$ embeds into its Picard group under the Abel-Jacobi map
$$X_k \rightarrow Pic^0(X_k), \qquad x \mapsto (x)-(x_0),$$
where $(x)-(x_0)$ denotes the equivalence class of divisors $(x)-(x_0)+Div^l(X_k)$. 

It is known that the parametrization $\Phi_k:X_k\rightarrow E_k$ can be factored as
\begin{equation}\label{eq:factor}
X_k \hookrightarrow Jac(X_k) \xrightarrow{\psi_k} \tilde{E_k} \xrightarrow{\phi_k}E_k.
\end{equation}

Here $X_k \hookrightarrow Jac(X_k)$ is the Abel-Jacobi map (for some base point $x_0 \in X_k$), $\phi_k$ is a rational isogeny, and $\tilde{E_k}$ (together with $\psi_k$) is the strong Weil curve associated to the newform $\f(2\tau/k)$ via Eichler-Shimura construction as follows.

Let $V_k$ be a $\C$-span of $\f(2\tau/k)\in S_2(\Gamma_k)$, and define $\Lambda_k:=H_1(X_k)|V_k$. Restriction to $V_k$ gives a homomorphism $\psi_k$
\begin{equation*}
Jac(X_k)\rightarrow V_k^\wedge/\Lambda_k\cong \tilde{E_k}.
\end{equation*}
Here $V_k^\wedge/\Lambda_k$ is a one-dimensional complex torus isomorphic to the rational elliptic curve $\tilde{E}_k$ with the Weierstrass equation $\tilde{E_k}:y^2=x^3-\frac{g_2(\Lambda_k)}{4}x-\frac{g_3(\Lambda_k)}{4}$.

Let $S$ be either $A$ or $T$. Since by Lemma \ref{lemma:normal} $S$ normalizes $\Gamma_k$, we can define the action of $S$ on $Jac(X_k)$ in two equivalent ways: for $\phi\in S_2(\Gamma_k)^\wedge/H_1(X_k,\Z)$ and $f(\tau)\in S_2(\Gamma_k)$ let $S(\phi)(f(\tau)):=\phi(f(\tau)|_2S),$ or for $P=(x)-(x_0)\in Pic^0(X_k)$ let $S(P)=(Sx)-(Sx_0)$. Now Lemma \ref{lemma:main} implies that the action of $S$ on $Jac(X_k)$ descends to the automorphism of $\tilde{E}_k$ of the order $\frac{k}{2}$.

Recall that $x$ and $y$ are functions on $E_k$ satisfying Weierstrass equation $y^2=f_k(x)$, and that $x(\tau)=x\circ\Phi_k(\tau)$ and $y(\tau)=y\circ\Phi_k(\tau)$ are modular functions on $X_k$.

\begin{proposition}\label{prop:xy}
Let $S$ be either $A$ or $T$. If $\Phi_k^{-1}(\mathcal{O})$ is invariant under $A$ and $T$, then 
\begin{enumerate}
\item[a)] 
\begin{equation*}
x(\tau)|S= \begin{cases} x(\tau), & \mbox{if } k=4, \\ -x(\tau), & \mbox{if } k=8.\end{cases}
\end{equation*}
\item[b)]
\begin{equation*}
y(\tau)|S= \begin{cases} y(\tau), & \mbox{if } k=6 ,\\ -y(\tau), & \mbox{if } k=12, 
\end{cases}
\end{equation*}
\end{enumerate}
\end{proposition}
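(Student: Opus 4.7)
The plan is to track how the parametrization $\Phi_k$ transforms under $S$ via the Eichler--Shimura factorization $\Phi_k: X_k \xrightarrow{\iota} Jac(X_k) \xrightarrow{\psi_k} \tilde{E}_k \xrightarrow{\phi_k} E_k$ recalled above. Since $S$ normalizes $\Gamma_k$ (Lemma \ref{lemma:normal}), it acts on $Jac(X_k)$; Lemma \ref{lemma:main} says that on the line $V_k \subset S_2(\Gamma_k)$ spanned by $\f(2\tau/k)$, the operator $|_2 S$ is multiplication by a primitive $(k/2)$-th root of unity $\zeta = e^{\pm 4\pi i/k}$. Consequently $S$ descends, via $\psi_k$, to an automorphism $\sigma_S$ of $\tilde{E}_k = V_k^\wedge/\Lambda_k$ of exact order $k/2$, realized on the universal cover $V_k^\wedge \cong \C$ as multiplication by $\zeta$.

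The key step is to upgrade this to an identity $\Phi_k \circ S = \tilde{\sigma}_S \circ \Phi_k$ of maps $X_k \to E_k$, for some automorphism $\tilde{\sigma}_S$ of $E_k$ of order $k/2$. The only subtlety is that $\iota$ depends on a base point $x_0 \in X_k$, so $\iota(Sx) = S \cdot \iota(x) + \delta$ where $\delta := (Sx_0) - (x_0)$. Setting $\eta := \phi_k(\psi_k(\delta)) \in E_k$ and using that $\phi_k \circ \psi_k$ is a group homomorphism, one obtains
\begin{equation*}
\Phi_k(Sx) \;=\; \phi_k\bigl(\sigma_S(\psi_k(\iota(x)))\bigr) + \eta.
\end{equation*}
For $k=4$, $\sigma_S$ has order $2$ and is $-1$ on $\tilde{E}_4$, which commutes with every isogeny; for $k \in \{6,8,12\}$, both $\tilde{E}_k$ and $E_k$ have complex multiplication by the same imaginary quadratic order, and any $\Q$-rational isogeny between them is automatically CM-equivariant. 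In all cases $\sigma_S$ descends through $\phi_k$ to an automorphism $\tilde{\sigma}_S$ of $E_k$ of order $k/2$, giving $\Phi_k(Sx) = \tilde{\sigma}_S(\Phi_k(x)) + \eta$. Evaluating at any $x \in \Phi_k^{-1}(\mathcal{O})$ and using the $S$-invariance of $\Phi_k^{-1}(\mathcal{O})$ forces $\eta = \mathcal{O}$, whence $\Phi_k \circ S = \tilde{\sigma}_S \circ \Phi_k$ on all of $X_k$.

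It then remains to read off the action of $\tilde{\sigma}_S$ on the affine coordinates of $y^2 = f_k(x)$. For $k=4$, $\mathrm{Aut}(E_4) = \{\pm 1\}$ and both elements fix $x$, so $x(\tau)|S = x(\tau)$. For $k=8$, the order-$4$ automorphism of $y^2 = x^3 + c_4 x$ is $(x,y) \mapsto (-x, \pm i y)$, hence $x(\tau)|S = -x(\tau)$. For $k=6$, the order-$3$ automorphism of $y^2 = x^3 + b_6$ has the shape $(x,y) \mapsto (\omega x, y)$ with $\omega$ a primitive cube root of unity, hence $y(\tau)|S = y(\tau)$. For $k=12$, the order-$6$ automorphism of $y^2 = x^3 + d_6$ is $(x,y) \mapsto (\omega x, -y)$, hence $y(\tau)|S = -y(\tau)$.

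The main technical obstacle is the base-point ambiguity of the Abel--Jacobi embedding, which a priori leaves an unspecified translation $\eta \in E_k$ in the relation between $\Phi_k \circ S$ and $\tilde{\sigma}_S \circ \Phi_k$; the hypothesis that $\Phi_k^{-1}(\mathcal{O})$ is $S$-invariant is exactly what is needed to kill $\eta$. A secondary point to verify is that, for $k \in \{6,8,12\}$, the rational isogeny $\phi_k$ between two CM curves with the same CM order truly is equivariant for the CM action, so that $\sigma_S$ descends to the claimed CM automorphism $\tilde{\sigma}_S$ whose effect on the Weierstrass coordinates is dictated by the standard formulae for $\wp(\zeta z)$ and $\wp'(\zeta z)$ on $\C/\Lambda_k$.
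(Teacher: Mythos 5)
Your proposal is correct, and it follows the paper's overall skeleton: factor $\Phi_k$ through $Jac(X_k)$ and the strong Weil curve, use Lemma \ref{lemma:main} (via Lemma \ref{lemma:normal}) to see that $S$ induces an automorphism of $\tilde{E}_k$ of exact order $k/2$, use the hypothesis on $\Phi_k^{-1}(\mathcal{O})$ to dispose of the Abel--Jacobi base-point ambiguity, and then read the answer off the explicit automorphism groups of the Weierstrass models. Where you genuinely diverge is the descent of the automorphism through the isogeny $\phi_k:\tilde{E}_k\to E_k$. The paper defines the induced map on $E_k$ by pushing $\sigma_S$ through $\phi_k$, which requires $S$-invariance of $\ker(\phi_k)$ (extracted from the hypothesis) and, for $k=6,8,12$, the assertion that $\phi_k$ is an isomorphism over $\Q$; you instead obtain the intertwining $\phi_k\circ\sigma_S=\tilde{\sigma}_S\circ\phi_k$ directly, from the fact that $[-1]$ commutes with every isogeny when $k=4$ and from CM-equivariance when $k\in\{6,8,12\}$ (both curves have $j=0$ or $1728$, hence CM by the same maximal order, and analytically the isogeny is multiplication by some $\lambda\in\C^\times$, which commutes with multiplication by the root of unity $\zeta$). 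This buys you something: your argument uses the hypothesis only once, to kill the translation $\eta=\phi_k(\psi_k((Sx_0)-(x_0)))$, and it is insensitive to whether $\phi_k$ is an isomorphism --- a claim the paper asserts but which is delicate, since $\Q$-isogenous curves with the same CM order need not be $\Q$-isomorphic; in fact your appeal to $\Q$-rationality of $\phi_k$ is not even needed, as the equivariance already holds over $\C$. Conversely, the paper's formulation makes the induced automorphism of $E_k$ visibly defined intrinsically from $S$, at the cost of the extra well-definedness check. Your coordinate read-off (all order-$4$ automorphisms of $y^2=x^3+c_4x$ negate $x$, all order-$3$ resp.\ order-$6$ automorphisms of $y^2=x^3+b$ fix resp.\ negate $y$, and $\mathrm{Aut}(E_4)=\{\pm1\}$ fixes $x$) matches the paper's and correctly makes the conclusion independent of which primitive root of unity $\zeta$ is.
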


\begin{proof}
For $P\in E_k$, we define the $S(P):=\phi_k(S(\tilde{P}))$ for any $\tilde{P}\in \phi_k^{-1}(P)$.  It is well defined since $S$-invariance of $\Phi_k^{-1}(\mathcal{O})$ implies the $S$-invariance of $Ker(\phi_k)$. We have that $\phi_k(S(P))=S(\phi_k(P))$, hence $S$ is an automorphism of $E_k$.

Let $x_0$ be a base point of Abel-Jacobi map in (\ref{eq:factor}). Then $x_0\in  \Phi_k^{-1}(\mathcal{O})$, hence $\phi_k\circ \psi_k$ maps $(Sx_0)-(x_0)$ to $\mathcal{O}$ in $E_k$. In particular, for $x\in X_k$ we have \begin{equation}\label{eq:S}
\Phi_k(Sx)=\phi_k\circ\psi_k((Sx)-(x_0))=\phi_k\circ\psi_k((Sx)-(Sx_0))=S(\Phi_k(x)).
\end{equation}

Assume first that $k=4$. Then $j(E_4) \ne 0, 1728$, and the automorphism group of $E_4$ is of order $2$ generated by $(x,y)\mapsto (x,-y)$. In particular $x(S(P))=x(P)$, for every $P\in E_4$. 

If $k=8$, then $S$ is an automorphism of order $\frac{k}{2}=4$ of $\tilde{E_k}$, hence $j(\tilde{E_k})=1728$, and $g_3(\Lambda_8)=0$. Moreover $\phi_k$ is isomorphism (defined over $\Q$), which implies that $S$ is an isomorphism of order $4$ of $E_8$ as well. The automorphism group is generated by $(x,y)\mapsto (-x,iy)$, hence $x(S(P))=-x(P)$ for every $P\in E_8$.

If $k=6$ or $12$, then $j(\tilde{E_k})=0$, $g_2(\Lambda_k)=0$, and $\phi_k$ is an isomorphism (defined over $\Q$). Therefore, $S$ has order $3$ on $E_k$ if $k=6$, and order $6$ if $k=12$. The automorphism group is generated by $(x,y)\mapsto (e^{2\pi i /3} x,-y)$, and in particular $y(S(P))=y(P)$ if $k=6$, and $y(S(P))=-y(P)$ if $k=12$, for every $P\in E_k$.

Now (\ref{eq:S}) implies
$$x(\tau)|S=x(S\tau)=x(\Phi_k(S\tau))=x(S(\Phi_k(\tau)))\quad \mbox{and}\quad y(\tau)|S=y(S\tau)=y(\Phi_k(S\tau))=y(S(\Phi_k(\tau))),$$
and the claim follows from the previous paragraph.

\end{proof}

\noindent We need the following technical lemma. Recall that $Q_k(\tau) := x(\tau) \D^{4/k}$.

\begin{lemma}\label{lemma:ramanujan}
If $\partial_{N,k}(Q_k(\tau))\in M^{\textrm{mer}}_6(\Gamma_0(N))$, then $Q_k(\tau)\in M^{\textrm{mer}}_4(\Gamma_0(N))$.
\end{lemma}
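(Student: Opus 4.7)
My plan is to reduce the lemma to showing $Q_k|_4 A = Q_k$ and $Q_k|_4 T = Q_k$, since then by Lemma \ref{lemma:easy} (together with the already-known $\Gamma_k$-modularity of $Q_k$) we obtain $Q_k \in M_4^{\textrm{mer}}(\Gamma_0(N))$. The key preliminary step is that the operator $\partial_{N,k}$ commutes with the slash operator by any $\gamma \in \Gamma_0(N)$. This should follow from Corollary \ref{prop:delta}(a), which gives $\Delta_{N,k} \in S_k(\Gamma_0(N))$, together with a direct computation: the logarithmic derivative $\Delta_{N,k}'/\Delta_{N,k}$ picks up a quasi-modular shift $\frac{kc}{2\pi i(c\tau+d)}$ under $|_2\gamma$, which cancels the analogous shift coming from $\frac{k}{8\pi i} f'|_6\gamma$ when $f$ has weight $4$.

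Granted this commutation, the hypothesis gives $\partial_{N,k}(Q_k|_4 S - Q_k) = 0$ for $S \in \{A, T\}$. The ODE $\partial_{N,k}(R) = 0$ rearranges to $(\log R)' = \frac{4}{k}(\log \Delta_{N,k})'$, whose meromorphic solutions form the line $\C \cdot \Delta_{N,k}^{4/k} = \C \cdot f_{N,k}(2\tau/k)^2$ (a genuine power since $\tfrac{4}{k}\cdot\tfrac{k}{2} = 2 \in \Z$). So there exist constants $c_S \in \C$ with $Q_k|_4 S - Q_k = c_S\, f_{N,k}(2\tau/k)^2$, and the proof reduces to showing $c_S = 0$.

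For this, I will write $Q_k = x(\tau)\, f_{N,k}(2\tau/k)^2$ and use Lemma \ref{lemma:main} to recast the above identity as $\alpha(S)\, x(S\tau) - x(\tau) = c_S$, where $\alpha(S)$ is the scalar with $f_{N,k}(2\tau/k)^2|_4 S = \alpha(S)\, f_{N,k}(2\tau/k)^2$. Since the pole set of $x(\tau) = x \circ \Phi_k(\tau)$ is exactly $\Phi_k^{-1}(\mathcal{O})$, this functional equation forces $\Phi_k^{-1}(\mathcal{O})$ to be $S$-invariant, and Proposition \ref{prop:xy} becomes available. For $k = 4, 8$, part (a) gives $x(S\tau) = \pm x(\tau)$ and $c_S = 0$ follows immediately. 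For $k = 6, 12$, part (b) together with the Weierstrass equation $y^2 = f_k(x)$ (which has no $x^2$ or $x$ term for these $k$) gives $x(S\tau)^3 = x(\tau)^3$, hence $x(S\tau) = \omega_S\, x(\tau)$ for some cube root of unity $\omega_S$; substituting into $\alpha(S) x(S\tau) - x(\tau) = c_S$ and using non-constancy of $x(\tau)$ forces $\omega_S = \alpha(S)^{-1}$ and $c_S = 0$.

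The principal technical hurdle I expect is the verification that $\partial_{N,k}$ commutes with the slash operator: it depends on the specific normalization $\frac{k}{8\pi i} f'$ in the definition (rather than the more standard $\frac{1}{2\pi i} f'$), and requires a careful accounting of the two quasi-modular shifts so that they cancel. Everything after that is first-order ODE theory and bookkeeping of roots of unity through Lemma \ref{lemma:main} and Proposition \ref{prop:xy}.
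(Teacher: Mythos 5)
Your proof is correct, but it takes a genuinely different route from the paper's. The paper's own proof is a two-line quotient argument: it writes $\partial_{N,k}(Q_k(\tau))=\frac{k}{8\pi i}\frac{x'(\tau)}{x(\tau)}Q_k(\tau)$ and observes that $\frac{x'(\tau)}{x(\tau)}$ is invariant under $|_2A$ and $|_2T$ because $x(\tau)$ is an eigenfunction of $A$ and $T$ (a fact read off from the proof of Proposition \ref{prop:xy}), so $Q_k$ is exhibited directly as the $\Gamma_0(N)$-modular form $\partial_{N,k}(Q_k)$ divided by a $\Gamma_0(N)$-invariant weight-two object. Note that this tacitly uses the hypothesis of Proposition \ref{prop:xy} (the $A,T$-invariance of $\Phi_k^{-1}(\mathcal{O})$), which is harmless in the only place the lemma is applied (inside the proof of Theorem \ref{thm:mod}) but is not part of the lemma's statement; your argument instead \emph{derives} that invariance from the lemma's stated hypothesis, via the observation that $\alpha(S)x(S\tau)=x(\tau)+c_S$ forces the pole sets, i.e.\ $\Phi_k^{-1}(\mathcal{O})$, to match under $S$ --- so you prove the lemma exactly as stated, at the cost of extra machinery: the commutation $\partial_{N,k}(g|_4\gamma)=\partial_{N,k}(g)|_6\gamma$ for $\gamma\in\Gamma_0(N)$ (which does hold, by the cancellation you indicate, precisely because of the $\frac{k}{8\pi i}$ normalization against the weight-$k$ form $\Delta_{N,k}$ and Corollary \ref{prop:delta}(a)), the identification of $\ker\partial_{N,k}$ with $\C\cdot f_{N,k}(2\tau/k)^2$, and the root-of-unity bookkeeping through Lemma \ref{lemma:main}. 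Both proofs ultimately lean on Proposition \ref{prop:xy}. One small point to make explicit in your $k=6,12$ case: from $x(S\tau)^3=x(\tau)^3$ you should note that $x(S\tau)/x(\tau)$ is a meromorphic function taking values in the finite set of cube roots of unity, hence is a constant $\omega_S$ (or simply quote the eigenfunction relation $x(S(P))=e^{2\pi i/3}x(P)$, up to a power, from the proof of Proposition \ref{prop:xy}); with that spelled out, your argument is complete.
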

\begin{proof}
As in the proof of Proposition \ref{lemma:j}, we have that $\partial_{N,k}(Q_k(\tau))=\frac{k}{8\pi i}x'(\tau)\D^{4/k}=\frac{k}{8\pi i}\frac{x'(\tau)}{x(\tau)}Q_k(\tau).$
Let $S$ be either $A$ or $T$. Then $(x(S\tau))'=x'(\tau)|_2S$, and the invariance of $\frac{x'(\tau)}{x(\tau)}$ under $S$ (hence under $\Gamma_0(N)$) follows from the fact that $x(\tau)$ is an eigenfunction for $S$, which follows from the proof of Proposition \ref{prop:xy}.
\end{proof}

Since $Q_k(\tau) := x(\tau) \D^{4/k}$, the Theorem \ref{thm:mod} for $k=4 \mbox{ and }8$ now follows from a) and b) of Corollary \ref{prop:delta}  and a)  of Proposition \ref{prop:xy}, while $k=6 \mbox{ and } 12$ case follows from $\partial_{N,k}(Q_k)(\tau)=y(\tau)\D^{6/k}$ together with a) and c) of Corollary \ref{prop:delta}, b) of Proposition \ref{prop:xy} and Lemma \ref{lemma:ramanujan}.

\section{Example}

Let $$f_{19,4}(\tau)=\sum_{n=1}^\infty a(n)q^n=q+2q^3-q^5-3q^7+q^9+\cdots$$ be a unique newform in $S_2(\Gamma_0(76))$, and denote by $\Delta_{19,4}(\tau)=f_{19,4}(\tau/2)^2\in S_4(\Gamma_0(19))$.

Set $\Gamma={\sm {\frac{1}{2}} 0 0 1}^{-1} \Gamma_0(76) {\sm {\frac{1}{2}} 0 0 1}$. For $\tau \in \bar{\H}$ we define 

$$ \Psi(\tau)=\pi i\int_{i\infty}^\tau f(z/2)dz.$$ 

 For $\gamma \in \Gamma$ and $\tau \in \bar{\H}$ , define $\omega(\gamma):=\Psi(\gamma\tau)-\Psi(\tau)$. One easily checks that $\frac{d}{d\tau}\omega(\tau)=0$, hence $\omega(\gamma)$ does not depend on $\tau$. Denote by $\Lambda$ the image of $\Gamma$ under $\omega$. By Eichler-Shimura theory $\Lambda$ is a lattice, and $\Psi(\tau)$ induces a parametrization $X:=\H / \Gamma \rightarrow \C / \Lambda$. The complex torus $\C/\Lambda$ is isomorphic to $E: y^2=x^3-\frac{g_2(\Lambda)}{4}x-\frac{g_3(\Lambda)}{4}$ by the map given by Weierstrass $\wp$-function and its derivative, $z \longmapsto \left(\wp(z,\Lambda),\wp'(z,\Lambda)/2\right)$, thus by composing these two maps we obtain a modular parametrization $\Phi:X\rightarrow E$.
 
One finds that generators $\omega_1$ and $\omega_2$ of $\Lambda$ are
$$\omega_1= 1.1104197465122\ldots, \quad \omega_2=0.5552098732561\ldots +
2.1752061725591\ldots \times i.$$ Moreover, $g_2(\Lambda)= \frac{256}{3}$ and $g_3(\Lambda)=\frac{4112}{27}$, hence it follows from Proposition \ref{lemma:j} that 
$$Q(\tau)=\Delta_{19,4}(\tau)\wp(\Psi(\tau),\Lambda)=1+\frac{1}{3}\left( 8q+8q^2+64q^3+232q^4+336q^5+256q^6+512q^7+\cdots\right)$$ satisfies a differential equation
\begin{align}
\partial_{19,4}(Q)^2&=Q^3-\frac{64}{3} Q \Delta_{19,4}^2-\frac{1028}{27}\Delta_{19,4}^3.
\end{align}

One finds that 
$$GCD\left(\left\{p+1-a(p):p \mbox{ prime}, p\equiv 1\pmod{76} \right\}\right)=1,$$
hence it follows from the special case of Drinfeld-Manin theorem (see Theorem 2.20 in \cite{D}) that $\Psi(\tau)$ maps cusps of $X$ to the lattice $\Lambda$, or equivalently that $\Phi$ maps cusps of $X$ to the point at infinity of $E$. Modular curve $X$ has six cusps, and one can check (for example by using software package Magma) that the degree of $\Phi$ is six, therefore the conditions of Proposition \ref{prop:main} and Theorem \ref{thm:mod} are satisfied, and we conclude that $Q(\tau)\in M_4(\Gamma_0(19))$.

\end{document}